\documentclass[english]{article}

\usepackage{microtype}
\usepackage{babel}
\usepackage[T1]{fontenc}
\usepackage[utf8]{inputenc}
\usepackage{lmodern}
\usepackage{csquotes}
\usepackage{mathtools}
	\numberwithin{equation}{section}
\usepackage{amsthm}
\usepackage{amssymb}
\usepackage{amscd}
\usepackage[hidelinks]{hyperref}
\usepackage{zref-clever}
	\zcsetup{abbrev=true}
\usepackage{booktabs}

\newcommand{\BC}{\mathbb C}
\newcommand{\BZ}{\mathbb Z}
\newcommand{\BP}{\mathbb P}
\newcommand{\cm}{\mathcal M}
\newcommand{\cn}{\mathcal N}
\newcommand{\cL}{\mathcal L}
\newcommand{\tK}{{\widetilde K}}
\newcommand{\ta}{{\widetilde a}}
\newcommand{\tn}{{\widetilde n}}
\newcommand{\tN}{{\widetilde N}}
\newcommand{\tM}{{\widetilde M}}
\newcommand{\tm}{{\widetilde\mu}}
\newcommand{\la}{\lambda}
\newcommand{\ep}{\epsilon}
\newcommand{\cp}{\mathsf P}
\newcommand{\ch}{\operatorname{ch}}
\newcommand{\td}{\operatorname{td}}
\newcommand{\End}{\operatorname{End}}
\newcommand{\Hom}{\operatorname{Hom}}
\newcommand{\Ext}{\operatorname{Ext}}
\newcommand{\Hilb}{\operatorname{Hilb}}
\newcommand{\vdim}{\operatorname{vdim}}

\newtheorem{proposition}{Proposition}[section]

\title{Counting higher-rank sheaves on toric threefolds}
\author{Nicolo Piazzalunga, Artan Sheshmani and Shing-Tung Yau}
\date{}

\begin{document}

\maketitle
\begin{abstract}
We develop an equivariant K-theoretic vertex formalism for a colored, abelianized higher-rank sector on smooth projective toric threefolds $X$.  Nonzero first Chern classes are encoded as magnetic fluxes and absorbed into line-bundle twists, after which the virtual character is redistributed into finite perturbative, vertex, and edge contributions.  We compute the resulting (unreduced and trace-free) virtual dimensions and analyze the interaction terms relevant to a possible reduction to rank one.  We emphasize that, on a compact toric threefold, colored partitions do not classify all torus-fixed stable higher-rank sheaves: the resulting series is therefore a proposed abelianized/framed vertex series unless an additional comparison theorem with the full stable-sheaf moduli problem is supplied.

When the threefold obstruction theory is not available in the desired form, we pass to the local Calabi--Yau fourfold $Y=\operatorname{Tot}(K_X)$ and formulate the corresponding colored DT4 vertex.  A $K_X$-twisted tautological insertion removes fixed points with nontrivial fiber layers, reducing solid partitions to plane partitions at the level of the local vertex.  In rank one we give a scheme-theoretic explanation: the Hilbert functor of subschemes of $X$ is canonically isomorphic to the constrained Hilbert functor of subschemes of $Y$ lying scheme-theoretically in the fixed zero section.  Under the hypotheses of the DT4 K-theoretic Lefschetz/virtual-pullback formalism, the associated support insertion reduces the rank-one fourfold theory to the threefold theory.  We keep this global rank-one statement logically distinct from the higher-rank colored-vertex reduction.
\end{abstract}
\tableofcontents

\section{Introduction}

Let $X$ be a smooth projective toric threefold, with K\"ahler form $\omega$ and canonical bundle $K_X$.  Since $X$ is compact and toric, it cannot be Calabi--Yau.  When $H^0(X,K_X^{-1})\neq0$ (for example when $X$ is Fano), moduli spaces of stable sheaves admit the standard threefold deformation theory under the usual hypotheses.  It is natural to ask whether suitably inserted higher-rank invariants are determined by rank-one data.  In the Calabi--Yau setting such rank-reduction statements are known in several regimes \cite{FeyzbakhshThomasRank0:2024,Feyzbakhsh:2021nds,DelZotto:2021gzy,FasolaMonavariRicolfi:2021}; recent work also advances the Fano case \cite{KarpovMoreira:2026}.

The vertex formalism used below should, however, be distinguished from the full compact stable-sheaf problem.  A colored partition at every toric chart describes the Coulomb-abelianized/framed sector in which the fixed object splits equivariantly into rank-one constituents.  General torus-fixed stable sheaves on a compact toric threefold are described by Klyachko-type data and their fixed loci need not be isolated; already in rank two, fixed components with constant reflexive hull can be products of projective lines \cite{GholampourKoolYoung:2018}.  Consequently, our threefold series is a proposed abelianized vertex series.  It becomes the full higher-rank Donaldson--Thomas series only in a moduli problem for which the colored-partition fixed locus is exhaustive, or after a separate comparison theorem.

A second route is to work on the noncompact Calabi--Yau fourfold $Y=\operatorname{Tot}(K_X)$ and study dimensional reduction to the zero section.  This is especially useful when one wants a DT4 formulation with tautological insertions.  The local colored vertex admits a direct fiber-reduction mechanism.  Globally, the fixed-divisor Hilbert-functor argument in Section~4.4 is rank one; we state this explicitly and do not use it as a proof of a general rank-$r$ stable-sheaf correspondence.

\subsection{Equations}

The fourfold self-duality equations \cite{Corrigan:1982th, Donaldson:1996kp}
constrain the $(0,2)$ part of the curvature of a connection on
a principal $U(r)$ bundle on $Y$ 
\begin{equation}
F = \bar \star_4 F
\end{equation}
where self-duality uses the $(4,0)$ holomorphic form of $Y$.  We impose stability
\begin{equation}
F^{1,1} \wedge \omega^3 = \omega^4 \mu(F) \cdot 1
\end{equation}
where the slope $\mu(F)$ depends on $c_1(F)$.
Let $\cm$ be (some compactification of) the space of solutions to
these equations, modulo gauge equivalence.  It is a disjoint union over
topological sectors.

The corresponding ADHM-type model \cite{Nekrasov:2018xsb}
involves four square matrices $B_a \in \End(\BC^k)$
and one rectangular matrix $I \in \Hom (\BC^r,\BC^k)$, satisfying
\begin{equation}
\begin{aligned}
[B_a,B_b] + \epsilon_{abcd} [B_c,B_d]^\dagger &= 0, \quad a,b,c,d=1,2,3,4\\
\sum_{a=1}^4 [B_a, B_a^\dagger] + II^\dagger &= \zeta \cdot 1_k
\end{aligned}
\end{equation}
modulo $U(k)$ gauge equivalence.  Here $\zeta$ is a real stability parameter,
and $\epsilon$ the completely antisymmetric tensor in four indices,
determined by the $SU(4)$ structure on $Y$.
The case $r=1$ corresponds to $\Hilb^k (\BC^4)$.

The 3d equations on $X$ also involve an adjoint-valued (3,0) form $\varphi$
\begin{equation}
\begin{aligned}
F_A^{2,0} + \partial_A^\dagger \varphi &=0 \\
F_A^{1,1} \omega^2 + [\varphi,\bar\varphi] &= \mu(F) \omega^3 \cdot 1
\end{aligned}
\end{equation}
Let $\cn$ be a compactification of the space of solutions modulo gauge equivalence,
again a disjoint union over topological data.

The corresponding ADHM-like model for $\BC^3$ is conveniently written with four $k\times k$ matrices $B_1,B_2,B_3,B_4$ and one rectangular matrix $I$.  The first three matrices encode the three spatial complex directions, while $B_4$ encodes the adjoint Higgs field.  They satisfy
\begin{equation}
\begin{aligned}
[B_a,B_b] + \epsilon_{abc4} [B_c,B_4]^\dagger &= 0, \quad a,b,c=1,2,3\\
B_4 I &=0 \\
\sum_{a=1}^4 [B_a, B_a^\dagger] + II^\dagger &= \zeta \cdot 1_k
\end{aligned}
\end{equation}
modulo the gauge symmetry.

\section{Geometry}

Within this section, let $X$ be a toric variety of complex dimension $d$.

If $X$ is non-compact (such as the Calabi-Yau case), one needs
to make further assumptions \cite[Thm 6.7]{karshon-lerman:2015}
for it to also be a symplectic quotient.
The case of the total space of the canonical bundle over a Fano threefold
is admissible, but this also works for the total space of the canonical bundle of
$F_3 \times \BP^1$, the product of a Hirzebruch surface and
projective line, or more general extensions thereof.

Denote the Newton polyhedron of $X$ by $\Delta=\Delta(X)$, and the local equivariant
weights near a fixed point $v\in\Delta_0$ by $q_1^v, \dots,q_d^v$.
By this we mean the generators of the equivariant K-theory of a point,
such that the tangent space to $X$ at $v$ has character $q_1 + \dots +q_d$.
We usually drop the label $v$.

Denote by $\psi$ the transition functions of an edge,
namely if direction 1 is tangent to the edge, then along the edge we have
$q_1 \to q_1^{-1}$ and $q_p \to q_p q_1^{\psi_p}$ for $p \neq 1$.
For a Calabi--Yau $d$-fold, preservation of $Q=\prod_a q_a$ under this transition gives
\begin{equation}
 \sum_{p\neq1}\psi_p=2.
\end{equation}
Notice that this is independent of $d$; the value $d-1$ happens to agree with it only in dimension three.

For any $A \subseteq \{1,\dots,d\}$, denote the products by $P_A \coloneq
	\prod_{a \in A} (1-q_a)$, $Q_A \coloneq \prod_{a \in A} q_a$,
as well as $P \coloneq P_{\{1,\dots,d\}}$ and $Q \coloneq Q_{\{1,\dots,d\}}$.
The CY condition forces $Q=1$.
The dual of $q_i$ is $q_i^*=q_i^{-1}$, and similarly for
Laurent polynomials.

Given a Laurent polynomial in the group-valued equivariant
parameters with integer coefficients $c_i$ and without
the constant term, define its plethystic exponential as
\begin{equation} \label{eq:ahat}
	\hat a :
	\sum_{i\in I} c_i m_i \mapsto
	\prod_{i \in I} (m_i^{1/2} - m_i^{-1/2})^{-c_i}
\end{equation}
Note that $\hat a(0)=1$.
Given a Laurent polynomial $\cp(x)$ in variables $x$ with integer coefficients,
define its size as $|\cp| = \lim_{x \to 1} \cp(x)$.

\subsection{Example: complex projective space}

Denote the homogeneous coordinates of $\BC^4$ as $X_i$, and momenta as
$p_i = |X_i|^2$, with the big torus acting as $X_i \mapsto e^{i \ep_i}
	X_i$, while the small torus $T$ as $X_i \mapsto e^{i \theta} X_i$.
The momentum map is
\begin{equation}
	\mu(p) \coloneq p_1 + p_2 + p_3 + p_4
\end{equation}
and the symplectic quotient is $X = \mu^{-1}(t)/T$ for some regular
value $t>0$ of $\mu$.  This is equivalent to a GIT quotient with $\BC^4
	\setminus \{ 0\}$ as semistable locus.

We have for the local coordinates
\begin{equation}
	\begin{array}{ccc}
		X_i \neq 0 & \text{weights}            & H       \\
		\midrule
		1          & X_2/X_1, X_3/X_1, X_4/X_1 & \ep_1 t \\
		2          & X_1/X_2, X_3/X_2, X_4/X_2 & \ep_2 t \\
		3          & X_1/X_3, X_2/X_3, X_4/X_3 & \ep_3 t \\
		4          & X_1/X_4, X_2/X_4, X_3/X_4 & \ep_4 t
	\end{array}
\end{equation}
where $H$ is the Hamiltonian of the torus action.  From this table we
can read off the local equivariant weights $q_a$ in terms of the global
ones $T_i = \exp i \ep_i$.  The polyhedron $\Delta$ is a tetrahedron,
and the Euler characteristic of $X$ is $\chi=4$.

\section{Threefold theory}
\subsection{Universal objects}

\paragraph{Scope of the colored fixed-point model.}
Throughout Sections~3.1--3.6 we work in the abelianized/framed sector in which a torus-fixed object is represented equivariantly by a direct sum of rank-one constituents
\[
 F_{\mathrm{ab}}\simeq\bigoplus_{i=1}^r L_i\otimes I_i,
\]
with independent Coulomb weights.  The corresponding local fixed data are $r$ colored asymptotic plane partitions.  This is the natural fixed-point description for framed/Quot-type vertex problems, but it is not a classification of the torus-fixed locus of the ordinary compact moduli space of stable rank-$r$ sheaves.  We therefore write the formulas below as characters of this colored sector and only identify them with a full DT theory when the chosen moduli problem has this fixed locus.

The Grothendieck--Riemann--Roch theorem gives the unreduced deformation character at a colored object $F$
(the minus sign is due to our convention for $\hat a$)
\begin{equation}\label{eq:threefold-unreduced-character}
 T_{\mathrm{unred}}=-\int_X\ch(E|_F\otimes E|_F^{\vee})\td(X).
\end{equation}
For a simple sheaf in the usual fixed-determinant DT problem one must remove the scalar trace.  Thus the trace-free character is
\begin{equation}\label{eq:threefold-tracefree-character}
 T_0=T_{\mathrm{unred}}+\chi(\mathcal O_X).
\end{equation}
For a smooth projective toric variety $\chi(\mathcal O_X)=1$.  This trace correction should not be confused with the separate ``one empty color per summand'' normalization used later in the abelianized vertex series.

Let $E$ denote the universal object whenever such a universal family exists on the chosen moduli presentation, and let $\pi_1,\pi_2$ be the two projections.  For every vertex and $i=1,\dots,r$, let $\tK_i$ be an asymptotic plane partition.  The profiles at the two ends of every edge are required to match so that the local monomial data glue.  We identify a partition with its character, introduce Coulomb parameters $\ta_i$, and write $\ta_{ij}=\ta_i/\ta_j$.

From localization the unreduced colored character is
\begin{equation}
 T_{\mathrm{unred}}=-\sum_{v\in\Delta_0}\frac{\ch(E_v)\ch(E_v^{\vee})}{P^*}.
\end{equation}
where the pullback of the universal object to a fixed point has character
\begin{equation}
 \ch(E_v)=\sum_{i=1}^r\ta_i(1-P\tK_i).
\end{equation}
For notational economy we write $T$ for $T_{\mathrm{unred}}$ in the local redistribution formulas below; the trace-free correction \eqref{eq:threefold-tracefree-character} is restored when comparing with the ordinary simple-sheaf obstruction theory.

\subsection{Redistribution with magnetic fluxes}

Substitution gives
\begin{equation}
	T = \sum_v \sum_{i,j} \ta_{ij}
	\left( \tK_j^* - Q \tK_i - P \tK_i \tK^*_j - \frac{1}{P^*} \right)
\end{equation}
The asymptotic partitions $\tK_i$ can be infinite both along curves and
divisors in $X$.  To deal with the latter, observe that our
sheaf $F$ may have (equivariant) first Chern class supported on compact
toric divisors in $X$: this gives non-negative local asymptotic multiplicities (magnetic fluxes in physics)
\begin{equation}
	\tn_{i,f} \coloneq \lim_{q_a, q_b \to 1} P_{ab} \tK_i,
	\quad f \in \Delta_2,
	\quad i=1,\dots,r
\end{equation}
(a face is locally spanned by two directions, here $a,b$).  Again, we
identify partitions with their characters, as well as their size in 1d.
Subject to the toric divisor relations these local multiplicities determine an equivariant line bundle $L_i$; its divisor class need not have a unique effective presentation.  We write $\ch(L_i)=q^{\tn_i}$.  Twisting by such line bundles, define
\begin{equation}
	K_i \coloneq q^{-\tn_i} \left( \tK_i - \frac{1-q^{\tn_i}}{P} \right)
\end{equation}
as well as shifted Coulomb parameters
\begin{equation}
	a_i \coloneq \ta_i q^{\tn_i}
\end{equation}
Now $K$s can only be infinite along curves, and we can write
\begin{equation}
	T = \sum_v \sum_{i,j} a_{ij}
	\left( K_j^* - Q K_i - P K_i K^*_j - \frac{1}{P^*} \right)
\end{equation}
in terms of the new variables.
For convenience, we define the perturbative part
\begin{equation}
	T_{pert} \coloneq - \sum_v \sum_{i,j} \frac{a_{ij}}{P^*}
\end{equation}
whose coefficient of each Coulomb monomial $\ta_{ij}$ depends only on the fluxes.  More precisely it is a finite sum of equivariant Euler characters of the line bundles $L_i\otimes L_j^*$.  It is therefore a Laurent polynomial because $X$ is compact.  The rest we call the instanton part, so that
\begin{equation}
	T = T_{pert} + T_{inst}
\end{equation}
and we redistribute it \cite{Maulik:2003rzb}.
For each edge, introduce 2d partitions
\begin{equation}
	\la_{i,e} \coloneq \lim_{q_a \to 1} (1-q_a) K_i,
	\quad e \in \Delta_1
\end{equation}
where edge corresponds to direction $a$, and regularized partitions
\begin{equation}
	K_{i,reg} \coloneq
	K_i - \sum_{e \in \Delta_1 | v \in e} \frac{\la_{i,e}} {P_e}
\end{equation}
where the sum runs over all edges that meet at a given vertex,
and we identify $e$ with its local direction at $v$.
Then define the local edge contribution
\begin{equation} \label{eq:teij}
	t_{e,ij} \coloneq \frac{\la^*_{j,e}}{P_e^*} - Q \frac{\la_{i,e}}{P_e}
	- P \frac{\la_{i,e} \la^*_{j,e}}{P_e P_e^*},
	\quad e \in \Delta_1
\end{equation}
and the vertex contribution $T_{v,ij}$, $v \in \Delta_0$, such that
\begin{equation}
	K_j^* - Q K_i - P K_i K^*_j = T_{v,ij} + \sum_{e | v \in e} t_{e,ij}
\end{equation}
Note that $T_{v,ij}$ can be expressed in terms of $K_{i,reg}$
and $\lambda_i$ only, thus it is manifestly finite:
\begin{multline} \label{eq:Tvij}
	T_{v,ij} = K_{j,reg,v}^* - Q K_{i,reg,v} - P K_{i,reg,v} K^*_{j,reg,v} \\
	-P K_{i,reg,v} \sum_{e | v \in e} \frac {\lambda^*_{j,e}} {P_e^*}
	-P K_{j,reg,v}^* \sum_{e | v \in e} \frac {\lambda_{i,e}} {P_e}
	-P \sum_{e \neq e'} \frac {\lambda_{i,e} \lambda^*_{j,e'}} {P_e P_{e'}^*}
\end{multline}
If we let
\begin{equation}
	T_{e,ij} \coloneq \sum_{v \in e} q^{\tn_{ij}} t_{e,ij}
\end{equation}
then we can show that it is also a polynomial, and we get
\begin{equation} \label{eq:tinst}
	T_{inst} = \sum_{i,j} \ta_{ij}
	\left(
	\sum_{v \in \Delta_0} q^{\tn_{ij}} T_{v,ij} + \sum_{e \in \Delta_1} T_{e,ij}
	\right)
\end{equation}

The redistributed character is a Laurent polynomial in
$\BZ[T_i^\pm,\ta_i^\pm]$.  The operation $\hat a$ in
\eqref{eq:ahat}, however, is defined only after removing zero-weight
summands.  Accordingly, throughout the localization formula we apply
$\hat a$ to the moving/nonconstant part of the character (or,
equivalently, first divide out the fixed empty/trace contribution in the
chosen normalization), and then sum over colored partitions in the fixed
topological sector.

\subsection{Weights}

To relate the topological weights to the Chern character of $F$, introduce
the bookkeeping expression
\begin{equation}
I \coloneq \frac{1}{g_s^2}\int_X e^\omega\cdot\ch(g_sF).
\end{equation}
Here the notation is shorthand for the degree rescaling
\[
 \ch(g_sF):=\sum_{k=0}^3g_s^k\ch_k(F),
\]
not for tensoring $F$ by a scalar.  In a fixed rank and first-Chern-class
sector, the $g_s^{-2}$ and $g_s^{-1}$ pieces are common topological factors.
After suppressing those common factors, equivariant localization gives the
partition-dependent terms
\begin{equation}
I_3=-\sum_{v,i}|K_{i,v,reg}|-\sum_{e,i}f_e(\lambda_{e,i})
-\sum_{i,e}|\lambda_{i,e}|\int_e c_1(L_i),
\end{equation}
\begin{equation}
I_2=-\sum_{i,e}t_e|\lambda_{i,e}|,
\qquad t_e:=\int_e\omega,
\end{equation}
so that the partition-dependent part of $I$ is
$g_sI_3+I_2$.  Here $f_e(\lambda)$ denotes the finite edge/framing
contribution to the degree-three Chern character obtained from the edge
transition formula \eqref{eq:edge-line-bundle-sum}; it is determined by the
transition degrees $\psi$ and the partition $\lambda$.  This convention
also removes the ambiguous $O(\tn)$ notation: all remaining terms depend
only on the fixed rank/flux sector and contribute a common topological
prefactor to the generating function.

\subsection{Virtual dimension}

For the instanton part of the colored unreduced character we obtain
$|T_{inst}|=\sum_{e\in\Delta_1}\vdim_e$, where
\begin{equation}\label{eq:vdime}
 \vdim_e=
 r\sum_i |\lambda_{i,e}|\int_e c_1(T_X).
\end{equation}
Although magnetic fluxes enter the equivariant edge characters, their
linear contribution to the size cancels between the ordered color pairs.
This agrees with Riemann--Roch: for
$F_{\mathrm{ab}}=\bigoplus_i L_i\otimes I_i$, the endomorphism character
$\ch(F_{\mathrm{ab}}^\vee\!\otimes F_{\mathrm{ab}})$ is self-dual, so its
odd Chern-character part---and in particular a term linear in
$c_1(L_i-L_j)$ times a curve class---cancels after summing over $i,j$.
The flux dependence remains in the perturbative Euler characteristics.

For the perturbative part,
\begin{equation}\label{eq:pert-size}
 |T_{pert}|=-\sum_{i,j}\chi(X,L_i\otimes L_j^*).
\end{equation}
Hence the unreduced colored dimension is $|T_{inst}|+|T_{pert}|$.  If the
character comes from the ordinary moduli of simple sheaves with fixed
determinant, the scalar trace is removed and the virtual dimension is
instead
\begin{equation}\label{eq:tracefree-vdim}
 \vdim_{\mathrm{DT}}=|T_{inst}|+|T_{pert}|+\chi(\mathcal O_X)
 =|T_{inst}|+|T_{pert}|+1
\end{equation}
for a smooth projective toric threefold.  In particular, for rank one,
zero flux and point-like data on a Fano threefold, this gives virtual
dimension zero, as expected.

\subsubsection{Proof} \label{sec:vdim.proof}

Let us compute $|T_{inst}|$, as defined in \zcref{eq:tinst}.
The first term does not contribute, since we have $|q^{\tn_{ii}} T_{v,ii}|=0$
and $|q^{\tn_{ij}} T_{v,ij} + q^{\tn_{ji}} T_{v,ji}|=0$ for $j \neq i$.
This is special to dimension 3 (or odd).
So we get $|T_{inst}| =  \sum_{i,j} \sum_e |T_{e,ij}|$.
Let us discuss each edge separately.
For $i=j$, from \zcref{eq:teij} we get
\begin{equation} \label{eq:Tii}
|T_{e,ii}|
= |\sum_{v \in e} t_{e,ii}|
= |\sum_{v \in e} \frac{\lambda_{i,e}}{P_e} (1-Q)|
= |\lambda_{i,e}| \int_e c_1 (T_X)
\end{equation}
where we took the conjugate of half the terms and used
the claim that quadratic terms do not contribute.
For $i \neq j$, we get
\begin{multline}
|T_{e,ij}| + |T_{e,ji}^*|
= |\sum_{v \in e} q^{\tn_{ij}} (t_{e,ij} - Q^* t_{e,ij})| \\
= |\sum_{v \in e} q^{\tn_{ij}} \frac {\lambda_{i,e}}{P_e} (1-Q)
+ q^{\tn_{ij}} \frac {\lambda^*_{j,e}}{P_e^*} (1-Q^*)|
\end{multline}
the last equality again following from the claim.
The first term gives
\begin{equation}
\left|\sum_{v\in e}q^{\tn_{ij}}\frac{\lambda_{i,e}}{P_e}(1-Q)\right|
=|\lambda_{i,e}|\int_e c_1(T_X),
\end{equation}
and the second gives
\begin{equation}
\left|\sum_{v\in e}q^{\tn_{ij}}\frac{\lambda_{j,e}^*}{P_e^*}(1-Q^*)\right|
=|\lambda_{j,e}|\int_e c_1(T_X).
\end{equation}
Indeed, the line-bundle transition changes the equivariant polynomial but
not its size: after the two endpoint contributions on the compact edge are
combined, its character tends to $1$ in the nonequivariant limit.  Equivalently,
the possible terms linear in $c_1(L_{ij})$ cancel in the full ordered-color
sum, as predicted by the self-duality of
$\ch(F_{\mathrm{ab}}^\vee\!\otimes F_{\mathrm{ab}})$.
Consequently,
\begin{equation}
\sum_{i<j}\bigl(|T_{e,ij}|+|T_{e,ji}^*|\bigr)
=(r-1)\sum_i|\lambda_{i,e}|\int_e c_1(T_X).
\end{equation}
Combining this with $\sum_i |T_{e,ii}|$ from \zcref{eq:Tii} gives
\zcref{eq:vdime}.

Back to the claim: let us show first that the equality
\begin{equation}
|\sum_{v \in e} P \frac{\lambda_{i,e}}{P_e} \frac{\lambda^*_{i,e}}{P^*_e}|=0
\end{equation}
holds.  Let us pick coordinates where $e$ has tangent direction 1,
and drop the label $e$: we get for the LHS
\begin{equation}
\sum_{v \in e} \frac{P_{23}} {P^*_1} \lambda_i \lambda^*_i
= \sum_{\Box=(a,b),\Box'=(a',b') \in \lambda_i}
\sum_{v \in e} \frac{P_{23}} {P^*_1} q_2^{a-a'} q_3^{b-b'}
\end{equation}
We can discuss each summand separately: we get
\begin{multline}
\sum_{v \in e} q_2^{a-a'} q_3^{b-b'} \frac{P_{23}} {P^*_1}
= q_2^{\Delta_a} q_3^{\Delta_b}
\Big(
\frac{1-q_1^{\psi \cdot \Delta-1}}{1-q_1^{-1}}
-q_2 \frac{1-q_1^{\psi_2 + \psi \cdot \Delta-1}}{1-q_1^{-1}} \\
-q_3 \frac{1-q_1^{\psi_3 + \psi \cdot \Delta-1}}{1-q_1^{-1}}
+q_2 q_3 \frac{1-q_1^{\psi_2+\psi_3 + \psi \cdot \Delta-1}}{1-q_1^{-1}}
\Big)
\end{multline}
where $\Delta_a = a-a'$, $\Delta_b = b-b'$,
and $\psi \cdot \Delta = \psi_2 \Delta_a + \psi_3 \Delta_b$.
The term in parenthesis is the sum of four polynomials,
and it vanishes in the limit $q_{1,2,3} \to 1$.

Let us repeat the same argument for the case $i \neq j$.
Let us pick coordinates where $e$ has tangent direction 1,
and drop the label $e$, and let $\tn=\tn_{ij}$.  We get
\begin{equation}
\sum_{v \in e} q^\tn \frac{P_{23}} {P^*_1} \lambda_i \lambda^*_j
= \sum_{\Box=(a,b) \in \lambda_i, \Box'=(a',b') \in \lambda_j}
\sum_{v \in e} \frac{P_{23}} {P^*_1} q^\tn q_2^{a-a'} q_3^{b-b'}
\end{equation}
We can discuss each summand separately.
We denote by $\tn_{1,2,3}$ the three components of $\tn$ at $v$,
and by $\tn'$ the components at $v' \in e$
(2 and 3 are the same, while 1 can be different).  We get
\begin{multline}
\sum_{v \in e} q_2^{a-a'+\tn_2} q_3^{b-b'+\tn_3}
q_1^{\tn_1(v)} \frac{P_{23}} {P^*_1}
= q_2^{\Delta_a} q_3^{\Delta_b} q_1^{\tn_1}
\Big(
\frac{1-q_1^{\rho}}{1-q_1^{-1}}
-q_2 \frac{1-q_1^{\psi_2 + \rho}}{1-q_1^{-1}} \\
-q_3 \frac{1-q_1^{\psi_3 + \rho}}{1-q_1^{-1}}
+q_2 q_3 \frac{1-q_1^{\psi_2+\psi_3 + \rho}}{1-q_1^{-1}}
\Big)
\end{multline}
where $\Delta_a = a-a'+\tn_2$, $\Delta_b = b-b'+\tn_3$,
$\psi \cdot \Delta = \psi_2 \Delta_a + \psi_3 \Delta_b$,
and $\rho = \psi \cdot \Delta-1-\tn_1'-\tn_1$.
The term in parenthesis is the sum of four polynomials,
and it vanishes in the limit $q_{1,2,3} \to 1$.
This shows that
\begin{equation}
|\sum_{v \in e} q^{\tn_{ij}} \frac{P_{23}} {P^*_1}
\lambda_i \lambda^*_j| =0
\end{equation}

\subsection{Generating functions}

We want to compute generating functions of
\begin{equation}
\chi (\cn, \hat O^{vir} \otimes \cL)
\end{equation}
where hat means twist by the square
root of the canonical line bundle,
$O^{vir}$ is the virtual structure sheaf,
and $\cL$ is a K-theory class on $\cn$.
The role of $\cL$ is
to saturate the virtual dimension: while it can be pulled
back from a class on $X$, interesting observables
are those coming from $\cn$, such as the pushforward via $\pi_1$
of the universal sheaf
and its exterior and symmetric powers.

We choose to weigh each summand by $I$,
and take the plethystic $\hat a (T_{inst})$, since all expressions
are finite:
\begin{equation}
	z_{inst} (\ta, \tn) \coloneq
	\sum_{\lambda} e^{I_2} \sum_{K | \lambda} e^{g_s I_3} \,
	\hat a (T_{inst}) \cdot \cL (K,\lambda,\tn)
\end{equation}
where the second sum is over all $K$s with boundary $\lambda$s.  In the colored vertex normalization it is convenient to remove the empty contribution of each diagonal color and set
\begin{equation}\label{eq:abelian-pert-normalization}
 z_{pert}^{\mathrm{ab}}\coloneq \hat a(T_{pert}+r).
\end{equation}
The $+r$ in \eqref{eq:abelian-pert-normalization} is an abelianized normalization, not the scalar trace correction of the ordinary rank-$r$ stable-sheaf deformation theory.  For the latter the correction is $+\chi(\mathcal O_X)=+1$, cf.\ \eqref{eq:threefold-tracefree-character}.

We therefore define the proposed colored/abelianized vertex series
\begin{equation}\label{eq:abelianized-series}
 Z_X^{\mathrm{ab}}(\cL)=\sum_{\text{admissible }\tn}
 z_{pert}^{\mathrm{ab}}\,z_{inst}.
\end{equation}
Compactness of $X$ makes each fixed topological-sector character a Laurent polynomial after the vertex/edge redistribution.  Independence of the Coulomb parameters $\ta$ is an additional rigidity/factorization statement and should not be inferred from compactness alone.  Likewise, \eqref{eq:abelianized-series} agrees with the K-theoretic DT generating function of ordinary stable rank-$r$ sheaves only when a comparison with the full torus-fixed moduli problem has been established.

\subsection{Interaction terms and square roots}

Both the vertex and edge terms contain interaction terms with $i \neq
j$, hence it is not obvious that $\ta$ decouple (certainly it does not
decouple for the perturbative part): we need to check whether $z_{inst}$
depends on $\ta$ or not.  If the interaction term is independent
of $\ta$, then we may get a factorization similar to the CY case
\cite{DelZotto:2021gzy}, with extra contributions to the topological
terms.  At the same time, we need to be able to take square roots, and we
will see that the two issues are related:  for the non-CY case, one option
\cite{Nekrasov:2014nea} is to twist the virtual canonical bundle $K_{vir}$
by $\det R\pi_{1,*}(E\otimes(L_4-L_5))^{\otimes r}$, where the two line bundles on $X$ satisfy $L_4\otimes L_5=K_X$, and then take a square root.

\subsubsection{Points}

Let us discuss points first, i.e.\ the case of $\BC^3$.  In this case
we can take $L_4=L_5=O$. The contribution to $K_{vir}$ only comes from
the determinant of $K^*-QK$ in \zcref{eq:Tvij}, and $K_{vir} = \prod_i Q
x_i^2$, with $x_i$ being the content of $K$: this is manifestly a square.

In the limit $\ta\to\infty$, the vertex only contributes terms
\begin{equation}
	\prod_v \prod_i (-Q^{1/2})^{(r+1-2i)|K_{i,v,reg}|}
\end{equation}
that we can reabsorb in the coupling.

\subsubsection{Curves}

For $m \in \BZ$, consider the Laurent polynomial
$\cp_m (q_1) = \frac{1-q_1^m}{1-q_1}$.
For any line bundle $L$ on $X$, we then have
\begin{equation}\label{eq:edge-line-bundle-sum}
	\sum_{v \in e} \ch(L)\frac {\lambda}{P}
	= \sum_{\Box \in\lambda} m_\Box \cp_{1+\psi.a+\psi_L}
\end{equation}
where $\psi$ are transition functions, and
$m_\Box = q_2^{a-1} q_3^{b-1}$ is the monomial
corresponding to box $(a,b) \in \lambda$.

For the edge character in a toric threefold, from \zcref{eq:teij} we get
\begin{equation}
	\sum_{v \in e} t_{e,ii}
	= \sum_{\Box \in \lambda} ( m_{\Box}^* \cp_{1+\psi.a}^*
	- Q m_{\Box} \cp_{1+\psi.a+(\psi_2+\psi_3-2)} )
\end{equation}
up to overall terms from the quadratic part.
While for a CY the determinant of the above equation is a perfect square,
if $Q\neq1$ we need to correct it \cite{Nekrasov:2014nea}.  Write $q_4,q_5$ for the local characters of $L_4,L_5$ and $\psi_4,\psi_5$ for their transition degrees along the edge; then $q_4q_5=Q^{-1}$ because $L_4\otimes L_5=K_X$.  After taking into account the pushforward, the corrected determinant takes the form
\begin{equation}
K_{vir}\otimes
\Bigl(\det R\pi_{1,*}(E\otimes(L_4-L_5))\Bigr)^{\otimes r}
= \prod_e \prod_{\Box \in \lambda_e}
(q_1^{-\psi_5}q_4m_\Box^{-1})^{2(1+\psi.a)}.
\end{equation}
The right-hand side is a perfect square, up to the constant factors raised
to the power $|\lambda|$.

Likewise, the large $\ta$ limit
\begin{equation}
	T_{e,ij} + T_{e,ji}^* = \sum_{v \in e} q^{\tn_{ij}} (t_{e,ij}-Q^* t_{e,ij})
\end{equation}
produces terms of the form
\begin{equation}
	\sum_{(a',b') \in \lambda_j}
	\sum_{(a,b) \in \lambda_i} q_2^{\Delta_2} q_3^{\Delta_3}
	(\cp_\rho - Q^* \cp_{\rho-(\psi_2+\psi_3-2)})
\end{equation}
with notation as in \zcref{sec:vdim.proof}.

\section{Fourfold theory}

Consider a toric Calabi--Yau fourfold $Y$ and let $\cm=\bigsqcup_\gamma\cm_\gamma$ denote a moduli space of ideal sheaves with Chern character $\gamma=(1,c_1,*,*,*)$.  We assume the relevant DT4 orientation data exist, so that the twisted virtual structure sheaf $\widehat{\mathcal O}^{\mathrm{vir}}$ is defined in the sense of Oh--Thomas \cite{Oh:2020rnj}.  With a line bundle $L_\mu$ on $Y$ we consider the localized equivariant K-theoretic index
\begin{equation}\label{eq:fourfold-index}
 Z_\gamma=
 \chi_T\!\left(\cm_\gamma,
 \widehat{\mathcal O}^{\mathrm{vir}}\otimes
 \widehat\Lambda^{\bullet} R\pi_{1*}(E\otimes\pi_2^*L_\mu)\right),
\end{equation}
which takes values in the localized representation ring $K_T(\mathrm{pt})_{\mathrm{loc}}$, not in $K_T(\cm_\gamma)$.  Since $\cm_\gamma$ can be noncompact, the non-equivariant limit need not exist.

The superscript ``$2$'' used below on a self-dual character is notation for the full symmetric DT4 character; it is not an ordinary square.  A ``square root'' or half-character $T$ means a choice satisfying $T+T^*=T^{(2)}$, together with the compatible orientation sign.

We assume that torus-fixed components of $\cm$ consist of isolated fixed
points, so that the vertex formalism works.  The advantage of the vertex
formalism is that it is universal, in the sense that the same building
blocks can be assembled to produce invariants for different choices of
$Y$, without the need to recompute everything from scratch.

First observe that a torus-fixed ideal sheaf $F=I_Z\in\cm$ is determined by a torus-invariant subscheme $Z$; it is the quotient $\mathcal O_Z$, rather than the rank-one ideal sheaf itself, whose support lies on fixed points, curves, surfaces and divisors.  Near each fixed
point $v \in \Delta_0 (Y)$, we have a chart where $Y$ is isomorphic to
$\BC^4$, and $F$ looks like a solid partition.  This solid partition can
be infinite along curves, surfaces and threefolds that meet at $v$, and it
must be \emph{compatible} with other solid partitions associated to $F$
in different charts.  Let $(\tK_v)_v$ be such a collection of compatible
solid partitions.  Its topological invariants, such
as sizes etc., are determined by $\gamma=\ch(F)$.

Having classified the fixed points, localization requires knowledge of 
the virtual tangent at a fixed point, $- \chi (F,F)_0$, where
\begin{equation}
\chi (F,F) = \sum_i (-1)^i \Ext^i (F,F)
\end{equation}
and subscript means removal of the constant part.  While the computation
of $\chi$ is standard across dimensions, in complex dimension four the
new challenge is the need to choose a half-character (square root) compatible with the DT4 Serre-duality pairing and an orientation.

The application of GRR theorem gives
\begin{equation}
\chi(F,F) = \int_Y \ch(E|_F \otimes E^*|_F) \td(Y)
\end{equation}
where $E^*$ denotes the dual of $E$, and we restricted it to $F \in \cm$.
Localization theorem then gives
\begin{equation}
RHS = \sum_{v \in \Delta_0} \frac{\ch(E_v) \ch(E_v^*)}{P_{1234}}
\end{equation}
where $E_v$ is pulled back via the inclusion of fixed point $v \in Y$,
and its virtual character is
\begin{equation}
\ch(E_v) = 1-P_{1234} \tK_v
\end{equation}
The constant (perturbative) part corresponds to the empty partition
\begin{equation}
\chi(O_Y,O_Y) = \sum_{v\in\Delta_0} \frac{1}{P_{1234}^*}
\end{equation}
and it is a rational function since $Y$ is non-compact.

Our choice of insertion, being written in terms of the universal
sheaf, has the nice property that
it can be taken into account by replacing $\ch(E_v)$ with
\begin{equation}
\ch(E_v) - \ch(L_\mu)
 = 1 - \mu - P_{1234} \tK
\end{equation}
in the GRR formula, where $\mu$ is the character of $L_\mu$.
Locally at $v$, define the self-dual character
\begin{equation}\label{eq:rankone-full-dt4-character}
T^{(2)} \coloneq (1-\mu)^* \tK + (1-\mu) \tK^* - P_{1234} \tK \tK^*.
\end{equation}
Its $\mu$-independent part satisfies
$\sum_{v\in\Delta_0}T^{(2)}|_{\mu=0}=-\chi(F,F)_0$.

If $\xi = c_1 (F) \neq 0$, we should get rid of it by twisting: let $\tn$
the equivariant first Chern class, and introduce the corresponding line
bundle $L_\tn$.  If we define
\begin{equation}
E_{twist} = L_\tn^* \otimes E
\end{equation}
then it satisfies
\begin{equation}
E_{twist} \otimes E_{twist}^* = E \otimes E^*
\end{equation}
and we end up with a collection of compatible solid partitions
\begin{equation}
K = q^{-\tn} (\tK - \frac{1-q^\tn}{P_{1234}}) 
\end{equation}
that can be infinite along curves and surfaces only,
and a shift in $\mu$.

A local half-character is
\begin{equation}\label{eq:rankone-half-character}
T=(1-\mu)^*K-P_{123}KK^*,
\end{equation}
and the Calabi--Yau relation implies
$T+T^*=T^{(2)}$, since $P_{123}+P_{123}^*=P_{1234}=P_{1234}^*$.

Assuming isolated fixed points and a compatible orientation, DT4 localization gives
\begin{equation}\label{eq:rankone-dt4-localization}
Z_\gamma = \sum_{\operatorname{inv}(\{\tK\})=\gamma} (-1)^{s(\{\tK\})}
\hat a \left( \sum_{v\in\Delta_0} T_v \right).
\end{equation}
The orientation signs are additional data.  A concrete rank-one vertex sign prescription is given in \cite{Nekrasov:2023nai}; for an assembled noncompact geometry, however, the global edge/face interaction signs and compatibility are part of the conjectural vertex formalism rather than a general theorem.  If we let the generating sum in a given sector $\xi$
\begin{equation}
Z_\xi \coloneq \sum_{\gamma | c_1=\xi} Z_\gamma \cdot w(\gamma)
\end{equation}
where we make some choice of weight $w$,
then we get
\begin{equation}
Z_\xi = \sum_{\{\pi,\lambda\}} \prod_{v\in\Delta_0} V_v
\prod_{e\in\Delta_1} (-1)^{s_e} E_e(\pi_e)
\prod_{f\in\Delta_2} (-1)^{s_f} F_f(\lambda_f)
\end{equation}
where only edge and face terms have an undetermined sign.
The dependence on $\xi$ is only in $\mu$
(different $\tn$ giving the same result) as well as the weights.

\subsection{Virtual dimension}

In the case of a CY fourfold, there are new contributions from quadratic
terms.   Each vertex now contributes
\begin{equation}
\vdim_v = |K_{v,reg}| - \sum_{(abcd)=(1234),(1324),(1423)}
|\lambda_{ab}| \cdot |\lambda_{cd}|
\end{equation}
each edge contributes
\begin{equation}
\vdim_e=f_e(\pi)-
\sum_{\substack{a,b\neq e\\ a\neq b}}
\sum_{(i,j)\in\lambda_{ae}}
\sum_{(i',k')\in\lambda_{be}}
\Bigl(i\psi_i+j\psi_j-(k'\psi_k+i'\psi_i)\Bigr),
\end{equation}
where $f_e(\pi)$ is the standard finite edge/framing contribution for a
solid partition $\pi$ in the notation of \cite{Nekrasov:2023nai}.  The
face contribution was computed in \cite[Lemma~9.6]{Nekrasov:2023nai}.

\subsection{Higher rank: the colored/Quot sector}

The discussion in this subsection concerns the colored direct-sum (equivalently, framed/Quot-type local) sector, not the full moduli space of stable higher-rank sheaves.  Introduce a fixed vector space $\tN$ of dimension $r$ and a rank-$r$ equivariant bundle $\tM$, with characters
$\tN=\sum_{i=1}^r\ta_i$ and $\tM=\sum_i\tm_i$.  At a colored fixed point we take
$F=\bigoplus_{i=1}^rF_i$ with $F_i$ rank-one ideal-sheaf data.  In the no-leg zero-dimensional case on $\BC^4$, this is precisely the higher-rank Quot geometry whose K-theoretic virtual weights and sign rule were derived geometrically by Kool--Rennemo \cite{KoolRennemo:2025}.  With nontrivial asymptotics, the global orientation problem for the assembled vertex/edge/face theory remains separate.

The full self-dual character at a torus fixed point reads
\begin{equation}
\chi(F,F) = \sum_{v\in\Delta_0} \frac{\ch (E_v\otimes E_v^*)} {P_{1234}}
\end{equation}
where $\ch E_v = \tN-P_{1234}\tK$, and $\tK$ is a tuple of $r$ asymptotic
solid partitions,
colored by $\tN$, i.e.\ $\tK = \sum_{i=1}^r \ta_i \tK_i$.
Compatibility is imposed independently for each $\tK_i$.

Our observable is obtained by replacing 
$E_v$ with $E_v - \tM$ in the RHS above.
Denote the resulting full self-dual character (after the indicated constant subtraction) by
\begin{equation}
-T^{(2)} = \frac{\ch (E_v-\tM) \ch (E_v-\tM)^*} {P_{1234}}
- \frac{(\tN-\tM)(\tN-\tM)^*}{P_{1234}}
\end{equation}
so that $-\chi_0(F,F)=\sum_{v\in\Delta_0}T^{(2)}|_{\tM=0}$.
The minus sign is due to our conventions for the plethystic exponential.

For $i=1,\dots,r$, let $\tn_i$ be the equivariant first Chern class of $F_i$.
The local divisor asymptotics are non-negative multiplicities, subject to the toric divisor relations; globally they determine equivariant line bundles $L_i$ with character $\ch L_i=q^{\tn_i}$.  We write $L_i=\mathcal O_Y(\sum_{f\in\Delta_{3}}\tn_{i,f}D_f)$ for a chosen toric presentation.
Use it to define the twist
\begin{equation}
K_i = q^{-\tn_i} \left( \tK_i - \frac{1-q^{\tn_i}}{P_{1234}} \right)
\end{equation}
One can check that these new solid partitions can only be infinite along curves and
surfaces \cite{Nekrasov:2023nai}.  However, the corresponding
equivariant parameters $a_i = q^{\tn_i} \ta_i$ can depend on the fixed point $v\in\Delta_0$,
via the integers $\tn_i$.
This construction satisfies $N-P_{1234}K=\tN-P_{1234}\tK$,
where $N = \sum_i a_i$ and $K = \sum_i a_i K_i$.
Thus
\begin{equation}\label{eq:higher-full-character}
-T^{(2)} = \frac{(N-P_{1234}K-\tM)(N-P_{1234}K-\tM)^*-(\tN-\tM)(\tN-\tM)^*}{P_{1234}}.
\end{equation}
Define the residual perturbative self-dual character by
\begin{equation}\label{eq:T6-full}
-T_6^{(2)} = \sum_{v\in\Delta_0} \frac {NN^*-\tN\tN^* - \tM (N - \tN)^* - \tM^* (N - \tN) } {P_{1234}}
\end{equation}
This is zero in the case of zero first Chern class ($\tn_i=0$ for all
$i=1,\dots,r$ and all divisors), and a Laurent polynomial in general,
depending only on the integers $\tn_i$.  Explicitly:
\begin{equation}
-T_6^{(2)} = \sum_v  \frac{
\sum_{i \neq j} \ta_{ij} (q^{\tn_{ij}}-1)
- \sum_{i,j} \tm_i/\ta_j (q^{-\tn_{j}}-1)
- \sum_{i,j} \ta_{i}/\tm_j (q^{\tn_{i}}-1)} {P_{1234}}
\end{equation}
Choose a half-character $T_6$ satisfying
\begin{equation}\label{eq:T6-half}
 T_6+T_6^*=T_6^{(2)}.
\end{equation}
Such a choice is obtained by pairing the second and third sums in \eqref{eq:T6-full} and splitting the first sum into $i<j$ and $i>j$ parts; the remaining sign is part of the DT4 orientation data.  Using
$P_{123}+P_{123}^*=P_{1234}$, define the variable half-character
\begin{equation}\label{eq:mag4}
T_{\mathrm{var}}=(N-\tM)K^*-P_{123}KK^*.
\end{equation}
Then the total half-character is
\begin{equation}\label{eq:total-higher-half}
 T_{\mathrm{half}}=T_6+\sum_{v\in\Delta_0}T_{\mathrm{var},v},
 \qquad
 T_{\mathrm{half}}+T_{\mathrm{half}}^*=\sum_{v\in\Delta_0}T_v^{(2)}.
\end{equation}
Thus the unnormalized localization weight contains the factor $\hat a(T_6)$.  Since $T_6$ depends only on the fixed flux sector, one may divide by this common perturbative factor and work with the normalized variable part $\hat a(\sum_vT_{\mathrm{var},v})$; this normalization must be stated explicitly.

\subsection{Reduction to threefold}

Let $Y=\operatorname{Tot}(K_X)$ and let $q_4$ denote the fiber weight.  In the colored vertex sector we specialize the insertion to $\tM=N\otimes K_X$, so that $\det(N-\tM)=K_X^{-r}$.  In the normalized variable half-character \eqref{eq:mag4}, the $\tM$-independent part is movable, whereas the insertion term contributes a negative character proportional to $q_4^{-1}K$.  If
\[
 K_i=\sum_{n=0}^h q_4^n\,\pi_{3,n}(q_1,q_2,q_3)
\]
has a nonzero fiber layer with $n>0$, the $K_X$-twisted exterior-algebra
insertion acquires a trivial-weight factor.  Its localized Euler factor
therefore vanishes, so only colored fixed points represented by plane
partitions on the zero section survive.  This is the fixed-point
manifestation of the support section considered in the rank-one global
argument below.

This is a local vertex statement.  It does not, by itself, identify the full moduli space of stable rank-$r$ sheaves on $Y$ with a moduli space on $X$.  The global scheme-theoretic argument below concerns rank-one ambient ideals and is kept logically separate.

\subsubsection{Rank-one scheme-theoretic and virtual reduction to the zero section}

This subsection is entirely rank one.  It supplies a global geometric explanation for the support insertion on Hilbert schemes, but it is not a proof of a higher-rank stable-sheaf reduction.

Let $p\colon Y=K_X\to X$ be the bundle projection and let
$i\colon X\hookrightarrow Y$ be the zero section.  The zero section is an
effective Cartier divisor, cut out by the tautological section of
$p^*K_X$.  Hence
\begin{equation}\label{eq:zero-section-line-bundles}
 \mathcal I_{X/Y}=\mathcal O_Y(-X)\cong p^*K_X^{-1},
 \qquad
 \mathcal O_Y(X)\cong p^*K_X,
 \qquad
 N_{X/Y}\cong K_X.
\end{equation}
The first equality in \zcref{eq:zero-section-line-bundles} involves the
inverse of the line bundle used in the insertion above: the divisor $X$ is
cut out by a section of $\mathcal O_Y(X)\cong p^*K_X$, whereas its ideal is
$\mathcal O_Y(-X)\cong p^*K_X^{-1}$.

The following statement is the fixed-divisor analogue of
\cite[Proposition~2.1 and diagram~(7)]{GholampourSheshmaniThomas:2014}.

\begin{proposition}[Fixed-divisor Hilbert functor]\label{prop:fixed-divisor-hilbert}
For a scheme $S$, put
\[
 Y_S=Y\times_{\operatorname{Spec}\mathbb C}S,
 \qquad
 X_S=X\times_{\operatorname{Spec}\mathbb C}S,
 \qquad
 i_S=i\times\operatorname{id}_S.
\]
Let $\Hilb_X(Y)$ be the functor whose $S$-valued points are $S$-flat
families $\mathcal Z\subset Y_S$ which are scheme-theoretically contained
in the fixed divisor $X_S$.  Then the natural transformation
\begin{equation}\label{eq:hilb-functor-isomorphism}
 \Hilb(X)\longrightarrow \Hilb_X(Y),
 \qquad
 \mathcal Z\subset X_S\longmapsto
 \mathcal Z\subset X_S\subset Y_S,
\end{equation}
is an isomorphism of functors.  In particular, it is an isomorphism of the
corresponding Hilbert schemes.

For every such family there is a canonical short exact sequence
\begin{equation}\label{eq:ambient-intrinsic-ideal-sequence}
 0\longrightarrow \mathcal I_{X_S/Y_S}
 \longrightarrow \mathcal I_{\mathcal Z/Y_S}
 \longrightarrow i_{S*}\mathcal I_{\mathcal Z/X_S}
 \longrightarrow 0.
\end{equation}
If
$\rho_S\colon\mathcal O_{Y_S}\twoheadrightarrow i_{S*}\mathcal O_{X_S}$
is the fixed restriction map, then the ambient ideal is recovered from the
intrinsic ideal by the ordinary inverse-image, or equivalently kernel,
formula
\begin{equation}\label{eq:ambient-ideal-kernel}
 \mathcal I_{\mathcal Z/Y_S}
 =\rho_S^{-1}\!\left(i_{S*}\mathcal I_{\mathcal Z/X_S}\right)
 =\ker\!\left(
 \mathcal O_{Y_S}\xrightarrow{\rho_S}i_{S*}\mathcal O_{X_S}
 \longrightarrow i_{S*}\mathcal O_{\mathcal Z}
 \right).
\end{equation}
\end{proposition}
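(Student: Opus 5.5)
The functor isomorphism is essentially a statement about closed subschemes and flatness, so I will prove it pointwise in $S$ and check naturality at the end. Since $i_S\colon X_S\hookrightarrow Y_S$ is a closed immersion, the closed subschemes of $Y_S$ contained scheme-theoretically in $X_S$ are in canonical bijection with closed subschemes of $X_S$. On ideals, these correspond to quasi-coherent ideals $\mathcal J\subset\mathcal O_{Y_S}$ with $\mathcal I_{X_S/Y_S}\subset\mathcal J$. Under this bijection the structure sheaves agree: $\mathcal O_{\mathcal Z}$ computed in $Y_S$ equals $i_{S*}\mathcal O_{\mathcal Z}$ computed in $X_S$. The map $\mathcal Z\to S$ is the same morphism in both descriptions, so $S$-flatness of $\mathcal Z$ is the same condition on either side. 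Properness relative to $S$ is likewise unchanged, if the Hilbert functors are required to have proper support; for $Y$ noncompact I would include this in the definition of $\Hilb_X(Y)$. The bijection commutes with base change $S'\to S$, because $X_{S'}=X_S\times_SS'$ and closed immersions are stable under base change. Compatibility with pullback of ideals then follows from the fibre-product description $\mathcal Z_{S'}=\mathcal Z\times_{Y_S}Y_{S'}=\mathcal Z\times_{X_S}X_{S'}$. This gives the natural isomorphism \eqref{eq:hilb-functor-isomorphism}. By Yoneda, it is an isomorphism of the representing Hilbert schemes wherever these exist.

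For \eqref{eq:ambient-intrinsic-ideal-sequence} I would use the snake lemma, or more directly the third isomorphism theorem. The chain $\mathcal I_{X_S/Y_S}\subset\mathcal I_{\mathcal Z/Y_S}\subset\mathcal O_{Y_S}$ gives
\[
\mathcal I_{\mathcal Z/Y_S}/\mathcal I_{X_S/Y_S}
=\ker\bigl(\mathcal O_{Y_S}/\mathcal I_{X_S/Y_S}\to\mathcal O_{Y_S}/\mathcal I_{\mathcal Z/Y_S}\bigr)
=\ker\bigl(i_{S*}\mathcal O_{X_S}\to i_{S*}\mathcal O_{\mathcal Z}\bigr).
\]
Since $i_{S*}$ is exact for a closed immersion, this kernel is $i_{S*}\mathcal I_{\mathcal Z/X_S}$. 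The quotient map is canonical, being induced by $\rho_S$. The same computation proves \eqref{eq:ambient-ideal-kernel}. A local section $f$ of $\mathcal O_{Y_S}$ lies in $\mathcal I_{\mathcal Z/Y_S}$ exactly when its image in $\mathcal O_{\mathcal Z}$ vanishes, that is, when $\rho_S(f)\in i_{S*}\mathcal I_{\mathcal Z/X_S}$. This also confirms that the inverse map of functors is given by the kernel formula. Here one uses \eqref{eq:zero-section-line-bundles} to identify the first term as $p_S^*K_X^{-1}$, which is $S$-flat.

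The main care point I expect is not algebraic but definitional. It lies in making the meaning of ``scheme-theoretically contained in the fixed divisor'' precise, namely $\mathcal I_{X_S/Y_S}\subset\mathcal I_{\mathcal Z/Y_S}$, as opposed to a set-theoretic support condition. It also lies in checking that this condition is stable under arbitrary, non-flat base change $S'\to S$, so that $\Hilb_X(Y)$ really is a subfunctor. I would handle this via the fibre-product description of the base change above, rather than through ideals. The reason is that pulled-back ideals need not be ideals of the pulled-back subscheme without flatness of $\mathcal O_{\mathcal Z}$, and the fibre-product argument sidesteps this issue. Finally I would remark, following \cite[Proposition~2.1]{GholampourSheshmaniThomas:2014}, that no deformation-theoretic input is needed at this stage. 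The comparison of obstruction theories is a separate step.
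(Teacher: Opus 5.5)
Your proposal is correct and follows essentially the same route as the paper. Both identify containment with $\mathcal I_{X_S/Y_S}\subset\mathcal I_{\mathcal Z/Y_S}$ (equivalently, factorization through $\rho_S$), get the exact sequence from the chain of ideals (your third-isomorphism computation is the paper's $3\times 3$ diagram), and invert via the quotient by $\mathcal I_{X_S/Y_S}$ or the kernel formula. Your fibre-product treatment of base change is a harmless variant of the paper's appeal to $S$-flatness of $\mathcal O_{\mathcal Z}$; the properness condition you would add is automatic, since $X$ is projective and so every closed subscheme of $X_S$ is proper over $S$.
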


\begin{proof}
A family $\mathcal Z\subset Y_S$ is scheme-theoretically contained in
$X_S$ if and only if
\begin{equation}\label{eq:family-containment-condition}
 \mathcal I_{X_S/Y_S}\subset \mathcal I_{\mathcal Z/Y_S}.
\end{equation}
Equivalently, the quotient homomorphism
$\mathcal O_{Y_S}\twoheadrightarrow\mathcal O_{\mathcal Z}$ factors
uniquely through $\rho_S$.  Thus a quotient of $\mathcal O_{Y_S}$ defining
a family contained in $X_S$ is exactly the same datum as a quotient of
$\mathcal O_{X_S}$ defining a family on $X_S$.

More explicitly, for a family $\mathcal Z\subset X_S$, the two standard
ideal sequences fit into the same commutative diagram used in the
moving-divisor argument:
\begin{equation}\label{eq:fixed-divisor-three-by-three}
\begin{CD}
0 @>>> \mathcal I_{X_S/Y_S} @>>> \mathcal I_{\mathcal Z/Y_S}
  @>>> i_{S*}\mathcal I_{\mathcal Z/X_S} @>>> 0\\
@. @| @VVV @VVV @.\\
0 @>>> \mathcal I_{X_S/Y_S} @>>> \mathcal O_{Y_S}
  @>{\rho_S}>> i_{S*}\mathcal O_{X_S} @>>> 0\\
@. @. @VVV @VVV @.\\
@. @. i_{S*}\mathcal O_{\mathcal Z} @= i_{S*}\mathcal O_{\mathcal Z} @.\\
@. @. @VVV @VVV @.\\
@. @. 0 @. 0 @.
\end{CD}
\end{equation}
The central row and the two right-hand columns are the usual ideal
sequences.  The top row is therefore exact; equivalently, it follows by
taking the kernel of the map
$\mathcal O_{Y_S}\to i_{S*}\mathcal O_{\mathcal Z}$.  This proves
\zcref{eq:ambient-intrinsic-ideal-sequence,eq:ambient-ideal-kernel}.

Conversely, let $\mathcal J\subset\mathcal O_{X_S}$ be an ideal whose
quotient is flat over $S$.  Define an ideal on $Y_S$ by
\begin{equation}\label{eq:ambient-ideal-from-intrinsic-ideal}
 \widetilde{\mathcal J}
 \coloneq
 \ker\!\left(
 \mathcal O_{Y_S}\xrightarrow{\rho_S} i_{S*}\mathcal O_{X_S}
 \longrightarrow i_{S*}(\mathcal O_{X_S}/\mathcal J)
 \right).
\end{equation}
Then
$\mathcal I_{X_S/Y_S}\subset\widetilde{\mathcal J}$ and
\[
 \mathcal O_{Y_S}/\widetilde{\mathcal J}
 \cong i_{S*}(\mathcal O_{X_S}/\mathcal J).
\]
Hence the same $S$-flat family is obtained, now viewed inside $Y_S$.
Starting with an ambient ideal satisfying
\zcref{eq:family-containment-condition}, the inverse construction is simply
\[
 i_{S*}\mathcal I_{\mathcal Z/X_S}
 \cong
 \mathcal I_{\mathcal Z/Y_S}/\mathcal I_{X_S/Y_S}.
\]
The two constructions are inverse and commute with base change, because the
quotient $\mathcal O_{\mathcal Z}$ is $S$-flat.  This proves
\zcref{eq:hilb-functor-isomorphism}.
\end{proof}

In particular, the defining section $s_X$ of the fixed divisor gives the
canonical inclusion
\begin{equation}\label{eq:fixed-section-complex}
 \mathcal O_{Y_S}(-X_S)\xrightarrow{s_X}
 \mathcal I_{\mathcal Z/Y_S},
\end{equation}
whose cokernel is $i_{S*}\mathcal I_{\mathcal Z/X_S}$.  Thus the two-term
complex in \zcref{eq:fixed-section-complex} is quasi-isomorphic to
$i_{S*}\mathcal I_{\mathcal Z/X_S}$.  The map is not arbitrary: it is the
restriction of the fixed inclusion
$\mathcal I_{X_S/Y_S}\hookrightarrow\mathcal O_{Y_S}$.  Allowing an
arbitrary map $\mathcal O_{Y_S}(-X_S)\to\mathcal I_{\mathcal Z/Y_S}$ would
allow its zero divisor to vary and would return one to a moving-divisor
problem.

\paragraph{Comparison with the moving-divisor proof.}
In \cite[Proposition~2.1]{GholampourSheshmaniThomas:2014}, one starts from
a universal chain $\mathcal Z\subset\mathcal H$ in a varying linear system.
The top row of their diagram~(7) is exactly the analogue of
\zcref{eq:ambient-intrinsic-ideal-sequence}.  The difficult part of their
converse is to recover both $\mathcal Z$ and the moving divisor
$\mathcal H$ from a universal stable pair: one takes the double dual of the
rank-one sheaf, accounts for a line bundle pulled back from the base, and
then recovers $\mathcal H$ as the zero locus of the induced section.  None
of those steps is needed here.  The divisor $X_S$ and its restriction map
$\rho_S$ are fixed from the outset, so the inverse construction is simply
the quotient by $\mathcal I_{X_S/Y_S}$, or equivalently the kernel formula
\zcref{eq:ambient-ideal-from-intrinsic-ideal}.

All occurrences of $\times$ in the definitions of $X_S$ and $Y_S$ above
are Cartesian products of schemes over $\operatorname{Spec}\mathbb C$.
The reconstruction formulas
\zcref{eq:ambient-ideal-kernel,eq:ambient-ideal-from-intrinsic-ideal} use
kernels and inverse images of sheaves; they are not tensor products.  Tensor
products are denoted by $\otimes$ throughout.

At the level of ordinary moduli functors, Proposition~\ref{prop:fixed-divisor-hilbert}
therefore gives the desired identification.  As in the moving-divisor paper,
the comparison of virtual counts is a separate statement: one must compare
the obstruction theories, or realize the fixed-support locus as the virtual
zero locus of the corresponding tautological section.  This is the role of
the insertion considered below.

\paragraph{Normal deformations and the role of the insertion.}
The support condition is essential.  The tangent space to the full Hilbert
scheme of $Y$ at $Z$ is
\begin{equation}
 T_Z\Hilb(Y)=\Hom_Y(\mathcal I_{Z/Y},\mathcal O_Z).
\end{equation}
The differential of the condition that $Z$ remain inside $X$ is the
restriction map
\begin{equation}\label{eq:normal-tangent-map}
 \Hom_Y(\mathcal I_{Z/Y},\mathcal O_Z)
 \longrightarrow
 \Hom_Y(\mathcal O_Y(-X),\mathcal O_Z)
 \cong H^0(Z,\mathcal O_Z(X))
 \cong H^0(Z,K_X|_Z).
\end{equation}
Its kernel consists of homomorphisms which vanish on
$\mathcal I_{X/Y}$ and hence factor through
$i_*\mathcal I_{Z/X}$; therefore
\begin{equation}
 T_Z\Hilb_X(Y)
 \cong \Hom_X(\mathcal I_{Z/X},\mathcal O_Z).
\end{equation}
Thus the rigidity of the divisor, measured by $H^0(X,N_{X/Y})$, does not by
itself eliminate normal deformations of a subscheme, which are measured by
$H^0(Z,N_{X/Y}|_Z)$.  For example, for a point $x\in X$ one has
$\Hilb^1(Y)=Y$ whereas $\Hilb^1_X(Y)=X$, and the missing tangent direction
is precisely the fiber of $K_X$ at $x$.

Let $\mathcal Z\subset\cm_\gamma\times Y$ be the universal subscheme and
let $\pi_1,\pi_2$ be the two projections.  The defining section
$s_X\in H^0(Y,\mathcal O_Y(X))$ induces the tautological section
\begin{equation}\label{eq:tautological-support-section}
 \tau_X\colon \mathcal O_{\cm_\gamma}\longrightarrow
 \mathrm R\pi_{1*}
 \bigl(\mathcal O_{\mathcal Z}\otimes\pi_2^*\mathcal O_Y(X)\bigr).
\end{equation}
For an $S$-valued family, the pullback of $\tau_X$ is the restriction of
$s_X$ to that family.  Its vanishing is equivalent to $s_X$ acting by zero on
$\mathcal O_{\mathcal Z}$, or equivalently to
$\mathcal I_{X_S/Y_S}\subset\mathcal I_{\mathcal Z/Y_S}$.  Thus, at the functorial level, the vanishing condition is exactly $\Hilb_X(Y)\cong\Hilb(X)$.  When the derived pushforward in \zcref{eq:tautological-support-section} is represented by a vector bundle (or by a perfect complex of the amplitude required by the DT4 Lefschetz formalism), this statement is realized by the corresponding ordinary/derived zero locus.

Under the hypotheses of the DT4 K-theoretic Lefschetz principle---perfectness and the required amplitude of the tautological complex, properness of the relevant fixed loci in the noncompact equivariant theory, compatibility of the symmetric obstruction theories, and compatible orientations---the localized virtual-pullback identity takes the schematic form
\begin{equation}\label{eq:k-theoretic-lefschetz-local}
 \sum_{e}(-1)^{\sigma(e)}(j_e)_*
 \widehat{\mathcal O}^{\mathrm{vir}}_{\Hilb(X)_e}
 =
 \widehat e\!\left(
 \mathrm R\pi_{1*}
 \bigl(\mathcal O_{\mathcal Z}\otimes\pi_2^*\mathcal O_Y(X)\bigr)
 \right)
 \otimes \widehat{\mathcal O}^{\mathrm{vir}}_{\cm_\gamma}.
\end{equation}
Here $j_e$ denotes the inclusion of a connected component, $\widehat e$ is the symmetrized K-theoretic Euler class, and the signs encode orientations.  Park proved the corresponding DT4 virtual-pullback/Lefschetz principle at the level of Oh--Thomas virtual cycles under a compatibility condition on the symmetric obstruction theories \cite{Park:2021}.  K-theoretic DT4 Lefschetz principles, including toric reductions to three-dimensional DT/PT theories for suitable tautological insertions, are developed by Bae--Kool--Park \cite{BaeKoolPark:2024}.  Formula \eqref{eq:k-theoretic-lefschetz-local} is to be read under precisely these compatibility and amplitude hypotheses; it is not a formal consequence of Proposition~\ref{prop:fixed-divisor-hilbert} alone.

Since $\mathcal O_Y(X)\cong p^*K_X$, this gives the global rank-one reason for using a $K_X$-twisted support insertion.  Its toric localization is compatible with the local observation that fixed subschemes carrying a nonzero fiber layer are killed by the insertion.

The universal ideal sequence
$0\to\mathcal I_{\mathcal Z/Y}\to\mathcal O_Y\to
\mathcal O_{\mathcal Z}\to0$ also gives, in equivariant $K$-theory,
\begin{equation}\label{eq:quotient-versus-ideal-insertion}
 \left[\mathrm R\pi_{1*}(\mathcal O_{\mathcal Z}(X))\right]
 =
 \left[\mathrm R\Gamma(Y,\mathcal O_Y(X))\right]
 -
 \left[\mathrm R\pi_{1*}(\mathcal I_{\mathcal Z/Y}(X))\right].
\end{equation}
Thus the divisor-cutting Euler class may equivalently be written, up to the
fixed perturbative factor coming from
$\mathrm R\Gamma(Y,\mathcal O_Y(X))$, in terms of the universal ideal
sheaf.  This is the normalization naturally used in the vertex expression.

\paragraph{Relation with the pushed-forward ideal sheaf.}
One should finally distinguish $\mathcal I_{Z/Y}$ from
$i_*\mathcal I_{Z/X}$: the former has rank one on $Y$, whereas the latter
has rank zero and is supported on $X$.  They lie in different sheaf-moduli
problems, and \zcref{eq:ambient-intrinsic-ideal-sequence} gives a relation
between them rather than an isomorphism.  Nevertheless, there is a separate
DT4/DT3 dimensional-reduction statement for the pushed-forward sheaf.  If
$F=\mathcal I_{Z/X}$, the Koszul resolution of the zero section gives
\begin{equation}\label{eq:pushforward-rhom-splitting}
 \begin{aligned}
 \mathrm R\Hom_Y(i_*F,i_*F)
 &\simeq
 \mathrm R\Hom_X(F,F)
 \oplus
 \mathrm R\Hom_X(F,F\otimes K_X)[-1]\\
 &\simeq
 \mathrm R\Hom_X(F,F)
 \oplus
 \mathrm R\Hom_X(F,F)^\vee[-4].
 \end{aligned}
\end{equation}
If $\Hom_X(F,F\otimes K_X)=0$, as happens for the stable sheaves in the
standard Fano situation, then
\begin{align}
 \Ext^1_Y(i_*F,i_*F)&\cong \Ext^1_X(F,F),\\
 \Ext^2_Y(i_*F,i_*F)&\cong
 \Ext^2_X(F,F)\oplus\Ext^2_X(F,F)^\vee.
\end{align}
The first summand in the second line is maximal isotropic for the CY4 Serre pairing.  The identification of the Kuranishi map with the threefold Kuranishi map is an additional compatibility statement; in the standard local-canonical-bundle setting it is precisely the content of the DT4/DT3 dimensional-reduction result of \cite{CaoLeung:2014}.  After choosing the compatible complex orientation, the DT4 half-obstruction theory then reduces to the ordinary DT3 obstruction theory on $X$.  This second statement justifies counting
$i_*\mathcal I_{Z/X}$ by the threefold theory, while
\zcref{eq:hilb-functor-isomorphism,eq:k-theoretic-lefschetz-local} justify
counting the rank-one ambient ideals $\mathcal I_{Z/Y}$ with the support
insertion.  The two reductions are compatible, but they are logically
distinct.

\section*{Acknowledgments}
The authors thank Nikita Nekrasov and Martijn Kool for fruitful conversations.  The second author acknowledges support from the Beijing Institute of Mathematical Sciences and Applications (BIMSA), the Beijing NSF BJNSF-IS24005, the NSFC-RFIS program W2432008, the NSF AI Institute for Artificial Intelligence and Fundamental Interactions at MIT under Cooperative Agreement PHY-2019786, and China's National Program of Overseas High Level Talent.  The research of NP is partially supported by the Beijing Natural Science Foundation under grant IS26010.


\begin{thebibliography}{99}
\bibitem{BaeKoolPark:2024} Y.~Bae, M.~Kool, and H.~Park, \emph{Counting surfaces on Calabi--Yau 4-folds II: DT--PT$_0$ correspondence}, arXiv:2402.06526 (2024).
\bibitem{CaoLeung:2014} Y.~Cao and N.~C.~Leung, \emph{Donaldson--Thomas theory for Calabi--Yau 4-folds}, arXiv:1407.7659 (2014).
\bibitem{Corrigan:1982th} E.~Corrigan, C.~Devchand, D.~Fairlie, and J.~Nuyts, \emph{First order equations for gauge fields in spaces of dimension greater than four}, Nucl. Phys. B 214 (1983), 452--464.
\bibitem{DelZotto:2021gzy} M.~Del Zotto, N.~Nekrasov, N.~Piazzalunga, and M.~Zabzine, \emph{Playing with the index of M-theory}, Commun. Math. Phys. 396 (2022), 817--865; arXiv:2103.10271.
\bibitem{Donaldson:1996kp} S.~K.~Donaldson and R.~P.~Thomas, \emph{Gauge theory in higher dimensions}, in \emph{The Geometric Universe}, Oxford Univ. Press (1998), 31--47.
\bibitem{Duistermaat:1982vw} J.~J.~Duistermaat and G.~J.~Heckman, \emph{On the variation in the cohomology of the symplectic form of the reduced phase space}, Invent. Math. 69 (1982), 259--268.
\bibitem{FasolaMonavariRicolfi:2021} N.~Fasola, S.~Monavari, and A.~T.~Ricolfi, \emph{Higher rank K-theoretic Donaldson--Thomas theory of points}, Forum Math. Sigma 9 (2021), e15; arXiv:2003.13565.
\bibitem{FeyzbakhshThomasRank0:2024} S.~Feyzbakhsh and R.~P.~Thomas, \emph{Rank $r$ DT theory from rank 0}, Duke Math. J. 173 (2024), 2063--2116; arXiv:2103.02915.
\bibitem{Feyzbakhsh:2021nds} S.~Feyzbakhsh and R.~P.~Thomas, \emph{Rank $r$ DT theory from rank 1}, J. Amer. Math. Soc. 36 (2023), 795--826; arXiv:2108.02828.
\bibitem{GholampourKoolYoung:2018} A.~Gholampour, M.~Kool, and B.~Young, \emph{Rank 2 sheaves on toric 3-folds: classical and virtual counts}, Int. Math. Res. Not. IMRN 2018, no.~10, 2981--3069.
\bibitem{GholampourSheshmaniThomas:2014} A.~Gholampour, A.~Sheshmani, and R.~P.~Thomas, \emph{Counting curves on surfaces in Calabi--Yau 3-folds}, Math. Ann. 360 (2014), 67--78; arXiv:1309.0051.
\bibitem{KarpovMoreira:2026} I.~Karpov and M.~Moreira, \emph{On the Feyzbakhsh--Thomas programme for Fano 3-folds}, arXiv:2607.13982 (2026).
\bibitem{karshon-lerman:2015} Y.~Karshon and E.~Lerman, \emph{Non-compact symplectic toric manifolds}, SIGMA 11 (2015), 055.
\bibitem{KoolRennemo:2025} M.~Kool and J.~V.~Rennemo, \emph{Proof of a magnificent conjecture}, arXiv:2507.02852 (2025).
\bibitem{Maulik:2003rzb} D.~Maulik, N.~Nekrasov, A.~Okounkov, and R.~Pandharipande, \emph{Gromov--Witten theory and Donaldson--Thomas theory, I}, Compos. Math. 142 (2006), 1263--1285; arXiv:math/0312059.
\bibitem{Nekrasov:2014nea} N.~Nekrasov and A.~Okounkov, \emph{Membranes and sheaves}, Algebr. Geom. 3 (2016), 320--369; arXiv:1404.2323.
\bibitem{Nekrasov:2018xsb} N.~Nekrasov and N.~Piazzalunga, \emph{Magnificent Four with colors}, Commun. Math. Phys. 372 (2019), 573--597; arXiv:1808.05206.
\bibitem{Nekrasov:2023nai} N.~Nekrasov and N.~Piazzalunga, \emph{Global Magni4icence, or: 4G Networks}, SIGMA 20 (2024), 106; arXiv:2306.12995.
\bibitem{Oh:2020rnj} J.~Oh and R.~P.~Thomas, \emph{Counting sheaves on Calabi--Yau 4-folds, I}, Duke Math. J. 172 (2023), 1333--1409; arXiv:2009.05542.
\bibitem{Park:2021} H.~Park, \emph{Virtual pullbacks in Donaldson--Thomas theory of Calabi--Yau 4-folds}, arXiv:2110.03631 (2021).
\end{thebibliography}
\end{document}